\documentclass[twoside,11pt]{amsart}

\usepackage{a4wide}
\usepackage{latexsym}
\usepackage{mathrsfs}
\usepackage[T1]{fontenc}
\usepackage{enumitem}
\usepackage{mathrsfs}

\usepackage{amssymb}
\usepackage{latexsym}
\usepackage{amsmath}
\usepackage{fancyhdr}
\usepackage{bbm}
\usepackage{setspace}
\usepackage{mathabx}

\numberwithin{equation}{section}

\newtheorem{theorem}{Theorem}[section] 
\newtheorem{lemma}[theorem]{Lemma}     

\newtheorem{proposition}[theorem]{Proposition}
\theoremstyle{definition}
    
\newtheorem{conjecture}[theorem]{Conjecture}

\newcommand {\C}{\mathbb C}

\newcommand {\N}{\mathbb N}

\newcommand {\B}{\mathcal B}

\newcommand{\F}{\mathbb{F}}

\newcommand{\supp}{{\rm supp\,}}

\newcommand{\eps}{\varepsilon}
\newcommand{\spn}{{\rm span \,}}

\newcommand{\im}{\operatorname{im}}

\newcommand{\codim}{\operatorname{codim}}

\title[On The Dales--{\.Z}elazko Conjecture]{ On The Dales--{\.Z}elazko Conjecture for Beurling Algebras on Discrete Groups}


\keywords{Banach algebra, virtually free group, virtually soluble group, maximal left ideal, finitely-generated, weight, Beurling algebra}

\subjclass[2020]{43A20, 46H10 (primary); 20E05, 20E08, 20F16 (secondary)}

\begin{document}

	\author[J.\ T.\  White]{Jared T.\ White}
	\address{
		Jared T. White, School of Mathematics and Statistics, The Open University, Walton Hall, Milton Keynes MK7 6AA, United Kingdom}
	\email{jared.white@open.ac.uk}

	\date{2023}
	
	\maketitle
	
		\begin{center}
	\textit{The Open University}
\end{center}
	
\begin{abstract}
	Let $G$ be a group which is either virtually soluble or virtually free, and let $\omega$ be a weight on $G$. We prove that, if $G$ is infinite, then there is some maximal left ideal of finite codimension in the Beurling algebra $\ell^1(G, \omega)$ which fails to be (algebraically) finitely generated. This implies that a conjecture of Dales and {\.Z}elazko holds for these Banach algebras. We then go on to give examples of weighted groups for which this property fails in a strong way. For instance we describe a Beurling algebra on an infinite group in which every closed left ideal of finite codimension is finitely generated, and which has many such ideals in the sense of being residually finite dimensional.
These examples seem to be hard cases for proving Dales and {\.Z}elazko's conjecture.
\end{abstract}

\section{Introduction}
In this article we explore some connections between the properties of a discrete group $G$ and the properties of the left ideals of its Beurling algebras $\ell^1(G, \omega)$.
 We shall show that the existence or not of a weight $\omega$ on $G$ such that $\ell^1(G, \omega)$ has a finitely-generated maximal left ideal of finite codimension depends on properties of the group $G$. One of our key methods will be based on weights defined in terms of word length with respect to a given generating set for the group.

We begin with some background and motivation. Let $A$ be an infinite dimensional unital Banach algebra, and let $I$ be a closed left ideal of $A$. There is a natural tension between finite generation of $I$ and various ways that $I$ could be ``large'', such as being maximal or having finite codimension. It is this idea that motivates the following conjecture of Dales and {\.Z}elazko \cite{DZ}:

\begin{conjecture}[Dales--{\.Z}elazko]
 Let $A$ be an infinite dimensional unital Banach algebra. Then $A$ has a maximal left ideal which is not finitely generated. 
\end{conjecture}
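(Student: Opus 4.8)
The plan is to argue by contradiction: assume that every maximal left ideal of the infinite-dimensional unital Banach algebra $A$ is finitely generated, and extract enough structure to force $\dim A < \infty$. Two general facts make the hypothesis bite. First, since $A$ is unital, every maximal left ideal is modular and hence norm-closed, so finite generation is being imposed on closed subspaces. Second, if $M = \sum_{i=1}^n A a_i$ is a finitely generated maximal left ideal, then the bounded left-module surjection $A^n \to M$, $(x_1, \dots, x_n) \mapsto \sum_{i=1}^n x_i a_i$, is open by the open mapping theorem, so there is a constant $C > 0$ such that every $m \in M$ can be written as $m = \sum_{i=1}^n x_i a_i$ with $\|x_i\| \le C\|m\|$. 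I regard this uniform estimate as the engine of the whole argument: it converts finite generation into a quantitative condition that can be tested against cleverly chosen elements of $M$.

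Alongside the estimate I would develop a second, ``cotangent'' engine. Let $\overline{M^2}$ denote the closed linear span of $\{mm' : m, m' \in M\}$. When $M$ has finite codimension, the action of $A$ on $M/\overline{M^2}$ factors through the finite-dimensional algebra $A/M$, because $M$ annihilates $M/\overline{M^2}$; hence a finite generating set $a_1, \dots, a_n$ for $M$ maps to a spanning set for $M/\overline{M^2}$ over $A/M$, and consequently $\dim M/\overline{M^2} < \infty$. In the simplest case, that of a character $\varphi$ with $M = \ker\varphi$, this reads $\dim_{\C} M/\overline{M^2} \le n$. The strategy is therefore to exhibit a maximal left ideal of finite codimension whose cotangent space $M/\overline{M^2}$ is infinite-dimensional, and to conclude at once that it cannot be finitely generated. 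This is precisely the mechanism that the present paper implements for Beurling algebras, where a word-length weight makes it possible to produce, for a suitable character, infinitely many elements of $M$ that stay linearly independent modulo $\overline{M^2}$: the generators can increase word-length only by a bounded amount per multiplication, so no finite set of them reaches arbitrarily far into $M$.

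The passage to a fully general proof runs into two related difficulties, and the second of them is the step I expect to be the true obstacle. The first is existence: to deploy either engine I want a maximal left ideal of finite codimension, equivalently a non-zero finite-dimensional irreducible representation, and an arbitrary infinite-dimensional unital Banach algebra need not possess one. Even the codimension-one case is not automatic, and it can be inert: for the unitization $A = \C 1 \oplus R$ of an infinite-dimensional radical algebra $R$ every element has one-point spectrum, so holomorphic functional calculus yields nothing and one must analyse $R/\overline{R^2}$ by hand. The second, decisive difficulty is the case in which $A$ has no maximal left ideal of finite codimension, so that every simple module $A/M$ is infinite-dimensional: the open-mapping estimate survives, but the cotangent engine collapses, since $A/M$ is now an infinite-dimensional operator algebra and finite generation no longer forces $\dim M/\overline{M^2} < \infty$, and there is no finite-dimensional quotient against which to manufacture estimate-violating elements. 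Closing this case appears to require a genuinely new invariant that detects non-finite-generation of an infinite-codimension maximal left ideal from soft hypotheses alone, and it is exactly this gap that keeps the Dales--{\.Z}elazko conjecture open in general. The discrete-group geometry exploited here supplies such a device in the Beurling setting, which is why the conjecture can be settled for the algebras $\ell^1(G, \omega)$ treated in this paper even though the general statement remains out of reach.
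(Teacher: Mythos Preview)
The statement you were asked to prove is not a theorem in the paper at all: it is the Dales--\.Zelazko \emph{conjecture}, which is open in general and which the paper does not claim to settle. Accordingly there is no ``paper's own proof'' to compare against. Your proposal recognises this, and you explicitly say at the end that the general statement remains out of reach; so what you have written is not a proof but a heuristic essay on possible approaches and their obstructions. That is fine as commentary, but it cannot be assessed as a proof attempt because it is not one.

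More importantly, your description of what the paper actually does is inaccurate, and this matters because it inverts the logic of the paper. You write that the paper ``implements'' the cotangent mechanism ``for Beurling algebras, where a word-length weight makes it possible to produce \ldots\ infinitely many elements of $M$ that stay linearly independent modulo $\overline{M^2}$'', and that ``the discrete-group geometry exploited here supplies such a device \ldots\ which is why the conjecture can be settled for the algebras $\ell^1(G,\omega)$ treated in this paper''. This is not what happens. The positive result, Theorem~1.2, concerns \emph{arbitrary} weights on virtually soluble or virtually free groups, and its proof does not use word length or any $M/\overline{M^2}$ argument: it passes to a finite-index normal subgroup with an infinite abelian quotient, invokes the already-known commutative case of the conjecture there, and then transports the non-finitely-generated ideal back up via Lemma~3.1. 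The radial exponential (word-length) weights appear only in Theorems~1.3 and~1.4, and there they are used for the \emph{opposite} purpose: to construct Beurling algebras in which every finite-codimension left ideal \emph{is} finitely generated. Those examples are exhibited precisely as hard cases for which the conjecture is \emph{not} resolved in the paper. So the word-length machinery does not ``supply a device'' that proves the conjecture; it supplies obstructions showing that the finite-codimension route can fail.
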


The conjecture has since attracted some attention, and is known to be true for various special classes of Banach algebras, for example commutative Banach algebras \cite{FT}, C*-algebras \cite{BK}, $\ell^1(G)$ for a group $G$ \cite{W1}, and the algebra of bounded linear operators on a large class of Banach spaces \cite{DKKKL}, including the reflexive Banach spaces \cite[Corollary 2.2.7]{W0}. At the time of writing no counterexample is known. 
 
 In \cite{W1} the author showed that the Dales--{\.Z}elazko Conjecture holds for Beurling algebras on discrete groups, provided that the weight is sufficiently well-behaved. In the present work we instead consider arbitrary weights but a restricted class of groups.  Our first result is the following.

\begin{theorem}		\label{2.2}
	Let $G$ be an infinite group which is either 
	\begin{enumerate}
		\item[\rm (i)] a virtually soluble group, or
		\item[\rm (ii)] a virtually free group,
	\end{enumerate}
	and let $\omega$ be an arbitrary weight on $G$. Then $\ell^1(G, \omega)$ contains a maximal left ideal of finite codimension which is not finitely generated. In particular $\ell^1(G, \omega)$ satisfies the Dales--\.Zelazko conjecture.
\end{theorem}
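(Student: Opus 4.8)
\medskip

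\noindent\emph{Plan of proof.} The goal is to exhibit in $\ell^1(G,\omega)$ a maximal left ideal of \emph{codimension one} that is not finitely generated as a left ideal; such an ideal is the kernel $\ker\chi$ of a character $\chi$, equivalently of a group homomorphism $\phi\colon G\to\C^{\times}$ with $|\phi(g)|\le\omega(g)$ for all $g\in G$. The architecture follows that of the unweighted case treated in \cite{W1}: reduce $G$ to a short list of base groups using permanence properties, then on each base group write down an explicit non-finitely-generated $\ker\chi$. What is new is that $\omega$ is arbitrary, and this is felt in the non-generation estimates.

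\smallskip

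\noindent\emph{Permanence and reduction.} If $H\le G$ has finite index then $\ell^1(G,\omega)$ is a finitely generated free right $\ell^1(H,\omega|_H)$-module (each summand $\ell^1_\omega(gH)$ being $\ell^1(H,\omega|_H)$ up to an equivalent norm), so inducing a maximal left ideal of finite codimension up from $\ell^1(H,\omega|_H)$ and passing to an irreducible subquotient produces one in $\ell^1(G,\omega)$, and a Frobenius-reciprocity computation shows that finite generation downstairs would force it upstairs; hence we may pass freely to finite-index subgroups. The same induction construction, now with infinitely many summands, transfers a non-finitely-generated maximal left ideal from $\ell^1(H,\omega|_H)$ to $\ell^1(G,\omega)$ for an arbitrary subgroup $H$, at the cost of raising the codimension by at most a finite amount; and deflation along a quotient map $q\colon G\twoheadrightarrow Q$ works whenever the pushed-forward weight $\bar\omega(x)=\inf\{\omega(g):q(g)=x\}$ is non-degenerate, in particular when $\ker q$ is finite. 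Feeding in the structure theory: an infinite virtually free group reduces, via a free finite-index subgroup, to a group containing $\Z$ (or to $\Z$ itself); and an infinite virtually soluble group reduces, via a finite-index soluble subgroup, either to a finitely generated infinite soluble group --- which an inspection of its derived series shows has a finite-index subgroup mapping onto $\Z$ --- or to an infinite torsion soluble-by-finite group, which is locally finite and hence contains an infinite abelian (torsion) subgroup. Thus it suffices to treat the base groups $\Z$ and the infinite torsion abelian groups.

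\smallskip

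\noindent\emph{The base groups.} For $\ell^1(\Z,\omega)$ the Gelfand spectrum is an annulus $\{\rho_-\le|z|\le\rho_+\}$ with $0<\rho_-\le\rho_+<\infty$; rescaling by the isometric isomorphism onto $\ell^1(\Z,\omega')$ with $\omega'(n)=\omega(n)\rho_+^{-n}$, we may assume $\rho_+=1$, and then the outer circle $|z|=1$ lies in the spectrum, so the corresponding characters are bounded; in particular $\omega(n)\ge1$ for every $n$ and the augmentation character $\chi$ (with $\chi(\delta_n)=1$) lies on the boundary of the spectrum. One proves $\ker\chi$ is not finitely generated using the open mapping theorem: a finite generating set $f_1,\dots,f_r$ would give a constant $C$ such that every $f\in\ker\chi$ is $\sum_i a_if_i$ with $\sum_i\|a_i\|_\omega\le C\|f\|_\omega$, whereas a comparison with the word-length weight $n\mapsto1+|n|$ --- the point being that the telescoping identity $\delta_n-\delta_0=\bigl(\sum_{k=0}^{n-1}\delta_k\bigr)(\delta_1-\delta_0)$, and its relatives, express elements of $\ker\chi$ in the ideal only at the cost of cofactors whose norms grow, with no bounded alternative --- produces an $f\in\ker\chi$ admitting no such representation. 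For an infinite torsion abelian group $A$ (on which $\omega\ge1$ automatically, $A$ being torsion) one again takes $\chi$ to be the augmentation character: if $\ker\chi=\sum_i\ell^1(A,\omega)f_i$ and the (countable) union of the supports of the $f_i$ generated a proper subgroup $L\le A$, then decomposing $\ell^1(A,\omega)$ over the cosets of $L$ shows that the $gL$-component of $\delta_g-\delta_e$, for $g\notin L$, equals $\delta_g w$ with $w$ in the augmentation ideal of $\ell^1(L,\omega|_L)$, forcing $\delta_e=w$ and hence the contradiction $1=\chi(w)=0$; the remaining case --- where those supports generate all of $A$ --- is dealt with by a quantitative refinement of the same coset argument together with the abundance of characters on $A$, as in \cite{W1}.

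\smallskip

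\noindent\emph{Main obstacle.} The crux is the non-generation estimate for an arbitrary weight. When $\omega$ grows quickly the algebra $\ell^1(G,\omega)$ is small, and a finitely generated left ideal is correspondingly hard to separate from the maximal ideal it is dense in; the comparison with word-length weights is precisely the device that makes quantitative the failure of the telescoping identities to admit bounded cofactors and feeds this into the open mapping theorem, and arranging this uniformly over all $\omega$ is the delicate step. The remaining difficulties are more technical: setting up the permanence properties so that finite codimension and non-finite-generation are preserved at once while the pushed-forward weights stay non-degenerate, and disposing of the infinite torsion (locally finite) groups in the last base case. It is in the latter that the hypothesis of being virtually soluble or virtually free --- which rules out wilder infinite subquotients --- does its real work.
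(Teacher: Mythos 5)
There is a genuine gap, and it sits in the two places where your plan does real work. First, the transfer of non-finite-generation from an \emph{infinite-index} subgroup. You claim that inducing up from an arbitrary subgroup $H\le G$ transfers a non-finitely-generated maximal left ideal ``at the cost of raising the codimension by at most a finite amount.'' This is false: the induced ideal $\sum_{g}\delta_g * I$ decomposes over the cosets of $H$, and each coset contributes $\codim I$ to the codimension, so for $[G:H]=\infty$ you land on an ideal of infinite codimension, and no mechanism is offered (nor is one known) for pulling finite generation back down across infinitely many summands. Your reduction relies on this step in exactly the cases where it is needed most --- the infinite torsion virtually soluble groups, where you pass to an infinite abelian subgroup of a locally finite group, and the free case, where you pass to ``a group containing $\Z$.'' The paper only ever moves along finite-index normal subgroups (its Lemma \ref{2.1}) and along quotients; for the soluble case it observes that if $k$ is minimal with $[S^{(k)}:S^{(k+1)}]=\infty$ then $S^{(k)}$ already has \emph{finite index} in $G$ and has infinite abelianization, so the torsion/non-finitely-generated subcases you split off never arise. (As a smaller point, your Frobenius-reciprocity sentence is stated in the wrong direction: what is needed is that finite generation \emph{upstairs} forces finite generation \emph{downstairs}, which is the content of the computation in Lemma \ref{2.1}.)

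Second, the base cases are not actually proved. For $\ell^1(\Z,\omega)$ you assert that a comparison with the word-length weight ``produces an $f\in\ker\chi$ admitting no such representation,'' and for infinite torsion abelian groups you defer the hard case to ``a quantitative refinement\ldots as in \cite{W1}''; these are precisely the uniform-in-$\omega$ estimates you yourself flag as the main obstacle, and they are left unestablished. The paper's key observation is that this obstacle can be avoided entirely: the Dales--\.Zelazko conjecture is already known for \emph{arbitrary} infinite-dimensional commutative unital Banach algebras \cite{FT}, hence for $\ell^1(A,\nu)$ with $A$ any infinite abelian group and $\nu$ any weight. One therefore takes the finite-index normal subgroup $H$ with $H/H'$ infinite, pulls back a non-finitely-generated codimension-one ideal of $\ell^1(H/H',\widetilde{\omega|_H})$ through the quotient homomorphism (preimages of non-finitely-generated codimension-one ideals under surjective algebra homomorphisms are again non-finitely-generated), and then goes up the finite-index extension by Lemma \ref{2.1} and Lemma \ref{2.1a}. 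No spectral analysis of $\ell^1(\Z,\omega)$ and no weight estimates are required. Your architecture could be repaired by (a) replacing the infinite-index reductions with the derived-series observation above and (b) citing the commutative case rather than re-proving it, at which point it would essentially coincide with the paper's argument.
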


One innovation of the present work on \cite{W1} is a lemma which allows us to go up finite-index group extensions. Then the fact that Dales and {\.Z}elazko's conjecture is known to hold in the commutative case means that it holds for any Beurling algebra on an abelian group, regardless of the weight. Combining these two ideas will prove the more general Proposition \ref{2.1b}, which will imply Theorem \ref{2.2}.

The results in \cite{W1}, as well as Theorem \ref{2.2}, establish the Dales--{\.Z}elazko Conjecture by finding examples of finite-codimension left ideals. These left ideals are usually easier to work with than those of infinite codimension. However, as we shall see, a Beurling algebra need not have a finite-codimension maximal left ideal that fails to be finitely generated.
Indeed, our next result shows that Beurling algebras can have starkly different behaviour to that displayed in Theorem \ref{2.2}. First of all, it can happen that there is only one finite-codimension maximal left ideal.

\begin{theorem}		\label{2.6}
	Let $G$ be a finitely-generated, infinite simple group. Fix a finite, symmetric generating set $X$ for $G$, and let $\omega(t) = 2^{|t|_X} \ (t \in G)$. Then the only proper, finite-codimension left ideal of $\ell^1(G, \omega)$ is the augmentation ideal $\ell^1_0(G, \omega)$, and it is finitely generated.
\end{theorem}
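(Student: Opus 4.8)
First I would unpack what needs to be shown: we must prove two things about $\ell^1(G,\omega)$ when $G$ is finitely-generated, infinite, and simple, and $\omega(t) = 2^{|t|_X}$. Namely (a) the augmentation ideal $\ell^1_0(G,\omega)$ is the unique proper finite-codimension left ideal, and (b) it is finitely generated.

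The plan is to start with (b), which should be the easy part. Since $G$ is generated by $X = \{x_1,\dots,x_n\}$, I expect $\ell^1_0(G,\omega)$ to be generated as a left ideal by $\{\delta_{x_1} - \delta_e, \dots, \delta_{x_n} - \delta_e\}$. The containment of the left ideal generated by these elements in $\ell^1_0(G,\omega)$ is clear since each generator lies in the augmentation ideal and the augmentation character is continuous. For the reverse containment, the key point is a quantitative telescoping estimate: writing an arbitrary $t \in G$ as a reduced word $y_1\cdots y_m$ in $X \cup X^{-1}$ with $m = |t|_X$, one has $\delta_t - \delta_e = \sum_{k} \delta_{y_1\cdots y_{k-1}}(\delta_{y_k} - \delta_e)$, and since $\delta_{y_k} - \delta_e$ is (up to left translation by $y_k^{-1}$, which is harmless in a left ideal) one of the generators or its "inverse" $-\delta_{y_k^{-1}}(\delta_{y_k^{-1}}^{-1}\cdots)$, the point is to bound $\sum_k \omega(y_1\cdots y_{k-1})$ by a constant times $\omega(t) = 2^m$. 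This works precisely because of the geometric growth of $\omega$: $\omega(y_1 \cdots y_{k-1}) \le 2^{k-1}$, so $\sum_{k=1}^{m} 2^{k-1} < 2^m = \omega(t)$. Summing this over a general $a = \sum_t a_t \delta_t \in \ell^1_0(G,\omega)$ (where $\sum_t a_t = 0$, so $a = \sum_t a_t(\delta_t - \delta_e)$) and checking the rearranged series converges in $\ell^1(G,\omega)$ gives membership in the finitely-generated left ideal. I would need to be slightly careful that the generators really are among the $\delta_{x_i} - \delta_e$ and that handling inverse letters $y_k = x_i^{-1}$ produces, after left-translation, a bounded multiple of $\delta_{x_i} - \delta_e$; the weight estimate $\omega(y_k^{-1}) = 2$ absorbs the relevant constant.

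The main obstacle is (a): showing there are no other proper finite-codimension left ideals. The standard reduction is that a proper left ideal of finite codimension is contained in a maximal left ideal of finite codimension, and maximal left ideals of finite codimension in a Banach algebra $A$ correspond to finite-dimensional irreducible representations; since $\ell^1(G,\omega)$ has a bounded approximate identity (indeed is unital after a trivial modification — actually it is already unital, with identity $\delta_e$), such a representation restricts to a finite-dimensional representation $\pi$ of $G$ on some $\C^d$, bounded by the weight in the sense that $\|\pi(t)\| \le C\omega(t)$ for all $t$. Because $G$ is simple, $\ker \pi$ is either all of $G$ or trivial; if it is all of $G$ the representation is trivial and the corresponding maximal left ideal is the augmentation ideal, which is what we want. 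So the crux is to rule out $\ker \pi = \{e\}$, i.e. to show $G$ admits no faithful finite-dimensional representation with $\|\pi(t)\| \le C\, 2^{|t|_X}$. Here I would argue: if $\pi$ were faithful, then $G$ embeds in $GL_d(\C)$; a finitely-generated infinite linear group that is simple cannot exist — by the Tits alternative a finitely-generated linear group either contains a nonabelian free subgroup or is virtually soluble, and an infinite simple group can be neither (a nonabelian free subgroup would have to be all of $G$ up to finite index, impossible since free groups are not simple; virtually soluble infinite groups have proper nontrivial normal subgroups). Hence no faithful finite-dimensional representation exists at all, so the weight growth condition is irrelevant and $\ker\pi = G$ is forced. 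Therefore the only maximal left ideal of finite codimension is the augmentation ideal, and since every proper finite-codimension left ideal sits inside a maximal one of finite codimension, and the augmentation ideal is maximal, every such ideal equals the augmentation ideal. I expect the delicate points to be: (i) justifying the correspondence between finite-codimension maximal left ideals and finite-dimensional irreducible representations in this Banach-algebraic setting, and (ii) confirming that a proper finite-codimension left ideal is automatically closed (true: finite-codimension subspaces containing a finite-codimension closed subspace, or one invokes that the ideal's closure is still proper), so that the representation-theoretic machinery applies cleanly.
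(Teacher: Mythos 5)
Your overall strategy for part (a) — a finite-codimension left ideal gives a finite-dimensional linear action of $G$, which by simplicity is trivial or faithful, and faithful is impossible because a finitely-generated infinite simple group cannot be linear — is exactly the paper's, and your part (b) is essentially the telescoping argument of \cite[Theorem 1.8 / Corollary 1.9(ii)]{W1}, which the paper simply cites. However, there are two genuine gaps in your execution of (a). First, your argument for non-linearity via the Tits alternative does not work: the alternative says a finitely-generated linear group contains a nonabelian free subgroup or is virtually soluble, but \emph{containing} a free subgroup in no way contradicts simplicity — the free subgroup need not be normal or of finite index (your claim that it ``would have to be all of $G$ up to finite index'' is unjustified and false in general; e.g.\ many infinite simple groups contain free subgroups). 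The correct tool, and the one the paper uses, is Malcev's theorem: finitely-generated linear groups are residually finite, and an infinite residually finite group has proper finite-index normal subgroups, so cannot be simple.

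Second, your reduction through maximal left ideals proves the wrong containment. Showing that the only finite-codimension \emph{maximal} left ideal is $\ell^1_0(G,\omega)$ only yields that every proper finite-codimension left ideal $I$ is \emph{contained in} $\ell^1_0(G,\omega)$; it does not rule out a proper finite-codimension left ideal strictly inside the augmentation ideal. To get equality you must run the representation argument on $I$ itself: $G$ acts on $\ell^1(G,\omega)/I$, the action is trivial, hence $(\delta_e-\delta_t)\cdot x=0$ for all $t$, and — using that $\ell^1_0(G,\omega)$ is \emph{algebraically} finitely generated by elements $\delta_e-\delta_t$ (this is where part (b) is genuinely needed, and it avoids any continuity or closedness hypothesis on $I$) — one concludes $\ell^1_0(G,\omega)\subseteq\ker(\text{action})\subseteq I$, whence $I=\ell^1_0(G,\omega)$ since $\ell^1_0(G,\omega)$ has codimension one and $I$ is proper. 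This is the direction of containment the paper establishes, and it is the one that closes the argument.
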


On the other hand, it can happen that the Beurling algebra has ``many'' finite-codimension maximal left ideals, all of which are  finitely generated. We do not know of any other example of an infinite dimensional unital Banach algebra satisfying (i) and (ii).

\begin{theorem}		\label{2.7}
	Let $G$ be a finitely-generated, just infinite, residually finite group, which is not a linear group. Fix a finite, symmetric generating set $X$ for $G$, and set $\omega(t)  = 2^{|t|_X} \ (t \in G)$. Then $\ell^1(G, \omega)$ has the following properties:
	\begin{enumerate}
		\item[\rm (i)] the intersection of the finite-codimension, maximal left ideals of $\ell^1(G,\omega)$ is $\{ 0 \}$;
		\item[\rm (ii)] every finite-codimension left ideal is finitely generated.
	\end{enumerate}
\end{theorem}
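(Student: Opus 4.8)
The plan splits along the two assertions: (i) is a soft point-separation argument that uses only residual finiteness, while (ii) is where the particular weight $\omega(t)=2^{|t|_X}$ does real work. Throughout write $A=\ell^1(G,\omega)$.

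For (i): if $M$ is a finite-codimension maximal left ideal then $V:=A/M$ is a finite-dimensional simple $A$-module, and since every nonzero vector of a simple module is cyclic one has $\bigcap\{M'\ :\ A/M'\cong V\}=\ker\pi_V$, where $\pi_V\colon A\to\operatorname{End}(V)$ is the associated representation and the intersection is over all maximal left ideals $M'$ with $A/M'\cong V$. Hence the intersection in (i) equals $\bigcap_V\ker\pi_V$ over (isomorphism classes of) finite-dimensional simple $A$-modules, and it suffices to produce, for every $a\in A\setminus\{0\}$, a finite-dimensional representation of $A$ not annihilating $a$. Since $\omega\ge 1$ we have a contractive embedding $A\hookrightarrow\ell^1(G)$, and composing with $\ell^1(G)\twoheadrightarrow\ell^1(Q)=\C[Q]$ for a finite quotient $q\colon G\twoheadrightarrow Q$ gives a homomorphism $q_\ast\colon A\to\C[Q]$; since the left regular representation of $Q$ is faithful on $\C[Q]$ and splits into irreducibles, it is enough to find $Q$ with $q_\ast(a)\neq 0$. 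To do this, replace $a$ by a suitable translate $\delta_{g_0}a$ so that its coefficient $a_e\neq 0$, pick a finite set $S\ni e$ with $\sum_{g\notin S}|a_g|<|a_e|$, and use residual finiteness to choose $q$ with $\ker q\cap S=\{e\}$; the coefficient of $\delta_e$ in $q_\ast(a)$ is then $\sum_{g\in\ker q}a_g$, of modulus at least $|a_e|-\sum_{g\notin S}|a_g|>0$.

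For (ii) I would first record the structural input: \emph{every finite-dimensional $A$-module factors through a finite quotient of $G$}. Indeed, restricting to $G$ gives a finite-dimensional representation $\rho$ of $G$; as $G$ is just infinite, $\ker\rho$ is trivial or of finite index, and triviality would make $G$ linear, contrary to hypothesis, so $\rho$ factors through the finite group $G/\ker\rho$. Now let $I$ be a finite-codimension left ideal. The $G$-action on $A/I$ factors through some finite quotient $G/N$, so the action of $a\in A$ on $A/I$ depends only on $\phi(a)$, where $\phi\colon A\to\C[G/N]$ is the contractive homomorphism with $\phi(\delta_g)=\delta_{gN}$; hence $J:=\ker\phi$ annihilates $A/I$, giving $J=J\cdot1\subseteq JA\subseteq I$. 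Since $A/J\cong\C[G/N]$ is finite-dimensional, $I/J$ is a finitely generated left ideal of it, so $I$ is finitely generated provided $J$ is finitely generated as a left ideal of $A$. Everything thus reduces to showing that $J$ is a finitely generated left ideal.

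The crux is this last point, and here the exponential weight is essential. Fix a finite transversal $R\ni e$ for $G/N$, write $r(g)\in R$ for the representative of $gN$, and set $\mathcal C:=\{\delta_{xr}-\delta_{r(xr)}\ :\ x\in X,\ r\in R\}\subseteq J$, a finite set; I claim $J=\sum_{c\in\mathcal C}A\,c$. For $g\in G$ fix a geodesic spelling $g=x_1\cdots x_k$ ($x_i\in X$, $k=|g|_X$) with prefixes $p_j=x_1\cdots x_j$ and $p_0=e$; peeling $x_1$ off the left and inducting on $k$ (base case $\delta_e-\delta_{r(e)}=0$) yields
\[
\delta_g-\delta_{r(g)}=\sum_{j=1}^{k}\delta_{p_{j-1}}\,c_j,\qquad c_j\in\mathcal C .
\]
Note that the elements of $\mathcal C$ occur \emph{on the right} here — which is exactly why one peels from the left. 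For $b=\sum_g b_g\delta_g\in J$ the coefficients of $b$ sum to zero over each coset, so $b=\sum_g b_g(\delta_g-\delta_{r(g)})$; substituting the identity above and regrouping the (absolutely convergent) double sum according to which $c\in\mathcal C$ appears gives $b=\sum_{c\in\mathcal C}w_c\,c$ with $w_c\in A$. The needed convergence comes from the weight: $\|\delta_{p_{j-1}}\|_\omega=2^{j-1}$, so $\sum_{j=1}^{k}\|\delta_{p_{j-1}}\|_\omega\le 2^k-1<\omega(g)$, whence $\|w_c\|_\omega\le\sum_g|b_g|\,\omega(g)=\|b\|_\omega<\infty$ and all the rearrangements are justified. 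Thus $J=\sum_{c\in\mathcal C}A\,c$, which together with the reduction above proves (ii).

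I expect the main obstacle to be precisely this last step, for two reasons. First, the geodesic telescoping must be organised so that the fixed generators end up as \emph{right} factors, so that after regrouping $b$ is genuinely a left-module combination of finitely many elements; peeling from the wrong end would land one in a right ideal. Second, the coefficients $\delta_{p_{j-1}}$ must have $\omega$-norms forming a geometric series dominated by $\omega(g)$ — this is exactly what fails for the unweighted algebra $\ell^1(G)$, where the analogous coefficients grow linearly in $|g|_X$ and $\sum_g|b_g|\,|g|_X$ need not converge — and it is why the hypothesis pins down an exponential weight (indeed any $\lambda^{|t|_X}$ with $\lambda>1$ would suffice for this argument). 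The just-infinite and non-linearity hypotheses enter only through the fact that finite-dimensional modules factor through finite quotients, forcing $I\supseteq J$; residual finiteness is used only for part (i).
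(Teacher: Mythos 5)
Your proof is correct, and while part (i) and the structural reduction in part (ii) match the paper, the key technical step is handled by a genuinely different route. In (i) you make exactly the paper's move: translate so the coefficient at $e$ dominates the tail, use residual finiteness to find a finite quotient on which the image is non-zero, and pull back a maximal left ideal of the semisimple algebra $\C[Q]$. In (ii) the reduction is also the paper's: just-infiniteness plus non-linearity force the $G$-action on $A/I$ to factor through a finite quotient $G/N$, so $I \supseteq J := J_\omega(G,N)$ and everything comes down to finite generation of $J$. Where you diverge is in proving that last point. The paper realises $G$ as a quotient of a free group $\F_n$, checks that the radial weight on $G$ is induced from the radial weight on $\F_n$, and proves finite generation of the corresponding ideal upstairs by writing it as $\sum_i \ell^1_0(K,\beta\vert_K)*\delta_{t_i}$ and showing the augmentation ideal of the subalgebra $\ell^1(K,\beta\vert_K)$ is finitely generated; since $\beta\vert_K$ is not radial with respect to a generating set of $K$, this requires a delicate cancellation lemma for geodesics in free groups (Lemma \ref{2.3}) to recover the geometric-series estimate. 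You instead work directly in $G$ with the Schreier-type generators $\delta_{xr}-\delta_{r(xr)}$ ($x\in X$, $r\in R$) and the telescoping identity along prefixes of a geodesic; the bound $\sum_{j=1}^{k}2^{j-1}<2^{k}=\omega(g)$ then justifies the rearrangement, exactly because prefixes of geodesics are geodesics and the weight is radial on $G$ itself. This is simpler and bypasses the free-group machinery (Lemmas \ref{2.3}--\ref{2.5}) entirely; the only thing it gives up is that your generators are not of the special form $\delta_e-\delta_t$ with $t\in N$, which the theorem does not require (though the paper uses that form elsewhere, e.g.\ in the proof of Theorem \ref{2.6}).
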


 Examples of groups satisfying this theorem include finitely-generated, just infinite branch groups, such as the Grigorchuk group and the Gupta--Sidki $p$-groups; it follows from \cite[Corollary 7]{A06} that branch groups cannot be linear. For background on branch groups and their relationship to the theory of just infinite groups we refer the reader to \cite{BGS}.

Note that in part (ii) of Theorem \ref{2.7} we do not assume that the left ideals are closed. However, as we shall remark after the proof, every finite codimension left ideal of this Beurling algebra is in fact closed. Likewise, in Theorem \ref{2.6} the augmentation ideal is the only proper finite-codimension left ideal, closed or otherwise.

These two theorems entail that, if the Dales--\.Zelazko conjecture is true for Beurling algebras, then in order to prove it one needs to look beyond finite-codimension left ideals. We have been unable to resolve the conjecture for these examples.

\section{Notation and Terminology}

We now fix our notation and terminology. Let $A$ be a unital Banach algebra, and let $I$ be a left ideal of $A$. We say that $I$ is \textit{finitely generated} if there exists $n \in \N$ and $a_1, \ldots, a_n \in I$ such that $I = Aa_1 + \cdots + Aa_n$. We shall usually study left ideals that are closed, but note that we do not take any closure on the right hand side in our definition of finite generation. Recall that a maximal left ideal in a unital Banach algebra is automatically closed, and that a codimension-1 left ideal is always maximal, although in general a finite-codimension left ideal need not be closed.

By a \textit{weight} on a group $G$ we mean a function $\omega \colon G \to [1, \infty)$ such that $\omega(st) \leq \omega(s)\omega(t) \ (s,t \in G)$ and $\omega(e) = 1$. We set
$$\ell^1(G, \omega) = \left\{f \colon G \to \C : \sum_{t \in G} |f(t)| \omega(t) < \infty \right\},$$
which is a Banach algebra with multiplication given by convolution, which we write as `$*$', and the norm given by $\|f\|_\omega = \sum_{t \in G} |f(t)| \omega(t) \ (f \in \ell^1(G, \omega)).$ Banach algebras of this form are called \textit{Beurling algebras}. We write $\C G$ for the set of finitely-supported, complex-valued functions of $G$, which is a dense subset of every Beurling algebra on $G$. Given $t \in G$ we write $\delta_t$ for the point-mass at $t$, and given $f \in \ell^1(G, \omega)$ we write $f^t := \delta_{t}*f*\delta_{t^{-1}}$.


Given a group $G$, a weight $\omega$ on $G$, and a normal subgroup $N$, we write $\widetilde{\omega}$ for the weight induced on the quotient $G/N$ defined by 
\begin{equation}		\label{eq5}
\widetilde{\omega}(Nt) = \inf\{ \omega(st) : s \in N \} \qquad (t \in G).
\end{equation}
 By \cite[Theorem 3.7.13]{RS} the quotient map $q \colon G \rightarrow G/N$ extends to a surjective bounded algebra homomorphism $q \colon \ell^1(G, \omega) \to \ell^1(G/N , \widetilde{\omega})$, whose kernel is given by
$$J_\omega(G,N) := \left\{ f \in \ell^1(G,\omega) : \sum_{s \in Nt} f(s) = 0 \ (t \in G) \right\}.$$
In fact $\ell^1(G,\omega)/J_\omega(G,N)$ and 
$\ell^1(G/N, \widetilde{\omega})$ are isometrically isomorphic as Banach algebras.
We define the \textit{augmentation ideal of $\ell^1(G,\omega)$} to be 
$$\ell^1_0(G, \omega) := J_\omega(G,G) = \left\{f \in \ell^1(G, \omega) : \sum_{t \in G} f(t)  = 0 \right\}.$$

Let $G$ be a group with finite generating set $X$. We write $|t|_X$ for the word length of $t \in G$ with respect to $X$, and, given $r \in \N$, we write
$$B_r^X = \{ t \in G: |t|_X \leq r \}.$$
We also write $\dot{B}_r^X : = B_r^X \setminus \{ e \}$. A generating set $X$ is said to be \textit{symmetric} if, for all $x \in X$, we have $x^{-1} \in X$. We shall often insist that our generating sets are symmetric, both to ease notation and to apply the results of \cite{W1}.
Word length gives us an important way to construct weights on the group $G$. Given $c>1$ we can define a weight $\omega$ on $G$ by 
$$\omega(t) = c^{|t|_X} \qquad (t \in G).$$
Such a weight is sometimes called a \textit{radial exponential weight}. Weights of this form  with $c=2$ appear in two of our main results discussed above; the choice $c=2$ has merely been made for concreteness, and the analogous results also hold for arbitrary $c>1$.

Let $G$ be a group, and let $H$ be a subgroup. By a \textit{right/left transversal for $H$ in $G$} we mean a set of right/left coset representatives respectively. When $H$ is a normal subgroup every left transversal is a right transversal and vice versa so we refer simply to a transversal for $H$ in $G$.

We write $\F_n$ for the free group on the set $\{a_1, \ldots, a_n \}$, where $a_1, \ldots, a_n$ are just $n$ arbitrary symbols. We say that a group $G$ is a \textit{linear group} if it is isomorphic to a subgroup of $GL_n(\mathbb{K})$, the group of invertible $n$-by-$n$ matrices over a field $\mathbb{K}$. We say that an infinite group $G$ is \textit{just infinite} if all of its non-trivial quotients are finite.

\section{The Proofs}
\noindent
Our first task is to prove Theorem \ref{2.2}. We begin by proving a lemma that allows us to use our understanding of a finite-index subgroup to study the ambient group.

\begin{lemma}		\label{2.1}
Let $G$ be a group, and let $\omega$ be a weight on $G$. Let $H$ be a finite-index, normal subgroup of $G$, and suppose that $\ell^1(H, \omega\vert_H)$ has a closed finite-codimension left ideal which is not finitely generated. Then so does $\ell^1(G, \omega)$. 
\end{lemma}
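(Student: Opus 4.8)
The plan is to exploit the fact that, since $H$ has finite index in $G$, the Beurling algebra $B := \ell^1(G,\omega)$ is a \emph{free left module of finite rank} over $A := \ell^1(H,\omega\vert_H)$, and then to transport the bad ideal upstairs by extension of scalars. First I would fix a right transversal $e = t_1, t_2, \ldots, t_n$ for $H$ in $G$, where $n = [G:H]$. The right cosets $Ht_1, \ldots, Ht_n$ partition $G$, and since the norm on a weighted $\ell^1$-space is additive over a partition of the underlying group, $B$ is the $\ell^1$-direct sum of the subspaces of functions supported on the various $Ht_i$. Each such subspace is a left $A$-submodule of $B$ (an element of $A$ is supported in $H$, and $H\cdot Ht_i = Ht_i$), and right convolution by $\delta_{t_i}$ is a topological isomorphism of $A$ onto it, the norm estimates $\omega(s)\omega(t_i^{-1})^{-1}\le \omega(st_i)\le \omega(s)\omega(t_i)$ for $s\in H$ following from submultiplicativity of $\omega$ and $\omega(e)=1$. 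Hence $B = \bigoplus_{i=1}^n A*\delta_{t_i}$ as left $A$-modules, the sum being topological; in particular $\delta_{t_1},\ldots,\delta_{t_n}$ generate $B$ as a left $A$-module.

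Next, let $I\subseteq A$ be the closed, finite-codimension, non-finitely-generated left ideal furnished by the hypothesis, and let $J := B*I$ be the left ideal of $B$ that it generates. Because $H$ is normal, for each $i$ the conjugation map $c_{t_i}\colon f\mapsto \delta_{t_i}*f*\delta_{t_i^{-1}}$ is a topological algebra automorphism of $A$ (boundedness from $\omega(t_iht_i^{-1})\le\omega(t_i)\omega(h)\omega(t_i^{-1})$), so $c_{t_i}(I)$ is again a closed left ideal of $A$ with $\codim_A c_{t_i}(I) = \codim_A I$. Using the identity $\delta_{t_i}*I = c_{t_i}(I)*\delta_{t_i}$ together with $A*c_{t_i}(I) = c_{t_i}(I)$ (as $A$ is unital and $c_{t_i}(I)$ a left ideal), one computes
\[
J \;=\; \sum_{i=1}^n A*\delta_{t_i}*I \;=\; \bigoplus_{i=1}^n c_{t_i}(I)*\delta_{t_i}.
\]
Since each $c_{t_i}(I)*\delta_{t_i}$ is a closed, finite-codimension subspace of the summand $A*\delta_{t_i}$, the ideal $J$ is closed in $B$ and $\codim_B J = n\,\codim_A I < \infty$. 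This is the one place where closedness of $I$ enters, and where a little care is warranted, since products of closed ideals need not in general be closed; here the explicit direct-sum description settles it.

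Finally I would show $J$ is not finitely generated, arguing by contradiction. If $J = B*f_1 + \cdots + B*f_m$ with $f_j\in J$, then, because $\delta_{t_1},\ldots,\delta_{t_n}$ generate $B$ over $A$, the finitely many elements $\delta_{t_i}*f_j$ generate $J$ as a left $A$-module. Let $\pi_1\colon B\to A*\delta_{t_1} = A$ be the projection onto the first summand of the decomposition above; this is a homomorphism of left $A$-modules, and by the displayed formula $\pi_1(J) = c_{t_1}(I) = I$. As the image of a finitely-generated module under a module homomorphism, $I$ would be finitely generated as a left $A$-module, hence as a left ideal of $A$ --- contradicting the choice of $I$. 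Thus $J$ is a closed, finite-codimension left ideal of $\ell^1(G,\omega)$ that is not finitely generated, as required. The only genuine work is the bookkeeping of the second paragraph: verifying the finite-rank free-module structure in the presence of the weight and pinning down the direct-sum description of $B*I$. Once that is done the rest is pure finite-rank-free-module formalism, the pivotal triviality being that a finitely generated left ideal is precisely a finitely generated submodule of the left regular module, and that this property passes to module images.
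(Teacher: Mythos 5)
Your proof is correct, and it constructs the same ideal as the paper --- since $\delta_{t_i}*I = c_{t_i}(I)*\delta_{t_i}$, your $J = B*I = \bigoplus_i c_{t_i}(I)*\delta_{t_i}$ coincides with the paper's $J = \bigoplus_i \delta_{t_i}*I$ --- but the execution of the crucial non-finite-generation step is genuinely different and cleaner. The paper argues by hand: it expands each product $h_i*f_i$ as a double sum $\sum_{j,k}\delta_{t_jt_k}*(h_i^{(j)})^{t_k^{-1}}*f_i^{(k)}$ over pairs of coset representatives, uses normality of $H$ to see that each term is supported on a single coset $t_jt_kH$, and then extracts the terms supported on $H$ to exhibit an arbitrary $g\in I$ as an $\ell^1(H,\omega|_H)$-combination of the $f_i^{(k)}$. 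Your argument packages exactly this computation as module theory: $B$ is a free left $A$-module of finite rank with basis $\delta_{t_1},\ldots,\delta_{t_n}$, so a finite generating set for $J$ over $B$ yields the finite generating set $\{\delta_{t_i}*f_j\}$ over $A$, and the $A$-module projection $\pi_1$ onto the summand $A*\delta_e = A$ (which is precisely the paper's ``restrict to $H$'' step) carries $J$ onto $I$, so $I$ inherits finite generation. What your route buys is the elimination of all the index bookkeeping and a visibly general principle --- finite generation of ideals descends along finite-rank free ring extensions via module projections; what the paper's route buys is a self-contained computation that never invokes the module-theoretic vocabulary. You use normality of $H$ in the same two places the paper does (so that each $c_{t_i}$ is an automorphism of $A$, respectively so that each $\phi_{ijk}$ is supported on $H$), and you correctly handle the one genuinely delicate point, namely that $J$ is closed and the sum is direct, by locating each $c_{t_i}(I)*\delta_{t_i}$ inside the $\ell^1$-summand $A*\delta_{t_i}$ rather than appealing to a general (and false) claim that finite sums of closed subspaces are closed.
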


\begin{proof}
Let $I$ be a finite-codimension left ideal in $\ell^1(H, \omega\vert_H)$ which fails to be finitely generated. We consider $\ell^1(H, \omega\vert_H)$ as a subalgebra of $\ell^1(G, \omega)$. Let $n = [G : H]$, and let $t_1, \ldots, t_n$ be a transversal for $H$ in $G$. Define
$$J = \left\{ \delta_{t_1}*f^{(1)} + \cdots + \delta_{t_n}*f^{(n)} : f^{(i)} \in I \ (i=1, \ldots, n) \right\}.$$
Note that $J$ is the $\ell^1$-direct sum of $\delta_{t_i}*I$ for $i=1, \ldots, n$, so it is closed. 
We show that $J$ is a left ideal of $\ell^1(G,\omega)$. Let $u \in G$ and for each $i = 1, \ldots, n$ write $ut_i = t_{j(i)}v_i$, where $v_i \in H \ (i = 1, \ldots, n)$, and $j$ is a permutation. Then given $f^{(i)} \in I \ (i=1, \ldots, n)$ we have 
$$\delta_u * \left(\sum_{i=1}^n \delta_{t_i}*f^{(i)} \right) = \sum_{i=1}^n \delta_{t_{j(i)}}*\delta_{v_i}*f^{(i)} \in J,$$
since each $\delta_{v_i}*f^{(i)} \in I$. It follows that $J$ is a left ideal in $\ell^1(G, \omega)$.

We claim that $\codim J = [G:H] \codim I,$ which is finite. Indeed, write $k = \codim I$, and suppose that $g_1, \ldots, g_k \in \ell^1(H, \omega \vert_H)$ have the property that $\{ g_1+I, \ldots, g_k+I \}$ is a basis for $\ell^1(H, \omega \vert_H)/I$. Then it is easily checked that $\{ \delta_{t_i}*g_j +J : i = 1, \ldots, n, \ j=1, \ldots, k \}$ is a basis for $\ell^1(G)/J$. 

Next we show that $J$ is not finitely generated.
Assume towards a contradiction that 
$$J = \ell^1(G, \omega)*f_1 + \cdots + \ell^1(G, \omega)*f_m,$$
 for some $m \in \N$, and some $f_1, \ldots, f_m \in J$. Then each $f_i$ may be written as 
$$f_i = \delta_{t_1}*f_i^{(1)} + \cdots + \delta_{t_n}*f_i^{(n)},$$
 for some $f_i^{(1)}, \ldots, f_i^{(n)} \in I$. 
Let $g \in I$ be arbitrary. Then in particular $g \in J$, so it may be written as $g = h_1*f_1 + \cdots + h_m * f_m$, for some $h_1, \ldots, h_m \in \ell^1(G, \omega)$.
 Again, we may write each $h_i$ as 
 $h_i = \delta_{t_1}*h_i^{(1)} + \cdots + \delta_{t_n}*h_i^{(n)}$, for some functions $h_i^{(1)}, \ldots, h_i^{(n)}  \in \ell^1(G, \omega)$ supported on $H$. We have
 $$h_i*f_i = \sum_{j, \, k=1}^n \delta_{t_j}*h_i^{(j)}*\delta_{t_k}*f_i^{(k)} 
= \sum_{j, \, k =1}^n \delta_{t_jt_k}*(h_i^{(j)})^{t_k^{-1}}*f_i^{(k)} 
= \sum_{j, \, k =1}^n \delta_{t_jt_k}*\phi_{ijk},$$
where $\phi_{ijk} := (h_i^{(j)})^{t_k^{-1}}*f_i^{(k)}$.
Since $H$ is normal, for all $i,j,k$ we have $\supp \phi_{ijk}  \subset H$, so that 
$\supp \left\{ \delta_{t_jt_k}*\phi_{ijk} \right\} \subset t_jt_k H$. 
 Since $g$ is supported on $H$ it must be equal to the sum of those terms $\delta_{t_jt_k}*\phi_{ijk}$ which are supported on $H$. For each $k = 1, \ldots, n$, write $k' \in \{ 1, \ldots, n \}$ for the natural number satisfying $t_k^{-1}H = t_{k'}H$.  We see that 
 $$g = g\vert_H = \sum_{k =1}^n \sum_{i=1}^m \delta_{t_{k'}t_k}* (h_i^{(k')})^{t_k^{-1}}*f_i^{(k)} .$$
 Hence, since $g$ was arbitrary, $I$ is generated by $\left\{ f_i^{(k)} : i=1, \ldots, m, \ k=1, \ldots, n \right\}$. This contradiction concludes the proof.
\end{proof}

In the proof of the next lemma $\B(E)$ denotes the Banach algebra of bounded linear operators on a Banach space $E$.

\begin{lemma}		\label{2.1a}
Let $A$ be a Banach algebra with a finite-codimension left ideal which is not finitely generated. Then:
	\begin{enumerate}
		\item[\rm (i)] $A$ contains a finite-codimension two-sided ideal which is not finitely generated as a left ideal;
		\item[\rm (ii)] if $A$ is also unital, then $A$ contains a finite-codimension maximal left ideal which is not finitely generated.
	\end{enumerate}
\end{lemma}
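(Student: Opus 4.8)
The plan is to prove (i) first and then derive (ii) as a fairly quick consequence. For part (i), suppose $I$ is a finite-codimension left ideal of $A$ that is not finitely generated, and consider the core of $I$, namely $K := \{ a \in A : Aa \subseteq I \}$, or equivalently the largest two-sided ideal of $A$ contained in $I$. A standard argument shows $K$ has finite codimension: the left regular representation of $A$ on the finite-dimensional space $A/I$ (or, if $A$ is non-unital, on $(A^\#/I)$ where we adjoin an identity) has kernel equal to $K$, realising $A/K$ as a subalgebra of $\B(A/I)$, which is finite-dimensional. It remains to check $K$ is not finitely generated as a left ideal. If it were, say $K = Aa_1 + \cdots + Aa_n$, I would combine these generators with a finite set of elements whose cosets span the finite-dimensional $A$-module $I/K$: picking $b_1, \dots, b_m \in I$ with $\{b_j + K\}$ spanning $I/K$, one checks that $\{a_1, \dots, a_n, b_1, \dots, b_m\}$ generates $I$ as a left ideal, since for any $c \in I$ we can subtract an appropriate linear combination $\sum \lambda_j b_j$ to land in $K$, and then use the $a_i$. (Here one uses that $K$ is a \emph{left} ideal contained in $I$, so $Ab_j \subseteq$ ... — more precisely that $I/K$ being spanned by the $b_j + K$ means $c - \sum\lambda_j b_j \in K$; note $K$ is two-sided so certainly a left ideal, and $Aa_i \subseteq K \subseteq I$.) This contradicts the non-finite-generation of $I$, so $K$ is the desired two-sided ideal.

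For part (ii), assume $A$ is unital and let $K$ be the finite-codimension two-sided ideal from part (i), which is not finitely generated as a left ideal. The quotient $A/K$ is a finite-dimensional unital algebra, hence (semisimple-or-not) has a composition series of left ideals; in particular there is a maximal left ideal $\mathfrak{m}/K$ of $A/K$, corresponding to a maximal left ideal $\mathfrak{m}$ of $A$ containing $K$, with $A/\mathfrak{m}$ finite-dimensional. I want to arrange that $\mathfrak{m}$ is not finitely generated. The point is that $\mathfrak{m} \supseteq K$ and $\mathfrak{m}/K$ is a finite-codimension (indeed finite-dimensional) subspace of the finite-dimensional algebra $A/K$, so $\mathfrak{m}$ differs from $K$ only by finitely many dimensions; by the same augmentation trick as in part (i) — adjoining to a hypothetical finite generating set of $\mathfrak{m}$ finitely many elements spanning $\mathfrak{m}/K$ would make $\mathfrak{m}$ finitely generated — wait, that goes the wrong way. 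Instead: if $\mathfrak{m}$ \emph{were} finitely generated, then since $K$ has finite codimension \emph{inside} $\mathfrak{m}$ (as $\mathfrak{m}/K$ is finite-dimensional), adjoining finitely many elements of $K$ whose images span the finite-dimensional module $\mathfrak{m}/K$... no — $K \subseteq \mathfrak{m}$ so that's automatic. The clean statement is: whenever $J_1 \subseteq J_2$ are left ideals with $J_2/J_1$ finite-dimensional, $J_2$ is finitely generated iff $J_1$ is. Applying this with $J_1 = K$, $J_2 = \mathfrak{m}$ gives that $\mathfrak{m}$ is not finitely generated, since $K$ is not.

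The main obstacle — really the only delicate point — is the lemma that a finite-dimensional "gap" between two nested left ideals preserves (non-)finite generation in both directions; the forward direction ($J_2$ f.g. $\Rightarrow$ $J_1$ f.g.) is the one actually needed and is the slightly less obvious half, requiring one to produce generators of $J_1$ from those of $J_2$ using that $J_2/J_1$ is a finitely-generated (indeed finite-dimensional) $A$-module together with a standard Noetherian-type argument, or alternatively by noting $A$ is a Banach algebra so one can invoke a closed-graph or open-mapping argument on the quotient module. I would state this gap lemma separately and cite it (it is essentially folklore; compare the manipulations already used in Lemma~\ref{2.1}), then the rest of Lemma~\ref{2.1a} is bookkeeping with finite-dimensional quotient algebras. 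One should also double-check the non-unital subtlety in part (i): the left regular representation on $A/I$ need not be faithful modulo $K$ unless one is careful, so using $A^\#$ (or directly defining $K$ as the largest two-sided ideal in $I$ and arguing $A/K \hookrightarrow \B(A/I) \oplus A/I$ or similar) is the safe route.
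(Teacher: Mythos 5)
Part (i) of your argument is essentially the paper's: the paper takes $J$ to be the kernel of $\theta \colon I \to \B(A/I)$, $a \mapsto (x+I \mapsto ax+I)$, notes $\dim(I/J) \leq \dim\B(A/I) < \infty$, and runs the same augmentation trick to transfer non-finite-generation from $I$ down to $J$. One small slip: your displayed formula $K = \{a \in A : Aa \subseteq I\}$ is a left ideal but is not in general two-sided, and it is not the kernel of the left regular representation on $A/I$; you want $\{a \in I : ax \in I \text{ for all } x \in A\}$. Your subsequent prose makes clear you intend the latter, so this is only a typo.

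Part (ii) has a genuine gap. The ``gap lemma'' you rely on --- if $J_1 \subseteq J_2$ are left ideals with $J_2/J_1$ finite-dimensional and $J_2$ finitely generated, then $J_1$ is finitely generated --- is false, and neither a Noetherian-type argument (infinite-dimensional Banach algebras are never Noetherian) nor a closed-graph argument (which cannot produce algebraic generators) will rescue it. Concretely, take $A = J_2 = \ell^1(\Z)$, generated by $\delta_0$, and $J_1 = \ell^1_0(\Z)$, which has codimension $1$ and is not finitely generated --- this is exactly the commutative case of the Dales--{\.Z}elazko theorem that the paper invokes elsewhere. If you object that here $J_2$ is not proper, take instead $A = \ell^1(\Z) \oplus \C$, $K = \ell^1_0(\Z) \oplus \{0\}$ (a two-sided ideal of codimension $2$ that is not finitely generated) and $M = \ell^1(\Z) \oplus \{0\}$, a maximal left ideal containing $K$ that is generated by the single element $(\delta_0, 0)$. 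This second example also defeats your specific application: a maximal left ideal containing a non-finitely-generated finite-codimension two-sided ideal can perfectly well be finitely generated, so you cannot conclude anything by picking an arbitrary $M \supseteq K$. What is true --- and is the content of \cite[Lemma 2.9]{DZ}, which the paper cites for precisely this step --- is that if \emph{every} maximal left ideal containing $K$ were finitely generated then $K$ itself would be finitely generated; its proof uses the structure theory of the finite-dimensional algebra $A/K$ and facts about products and intersections of finitely generated left ideals, not a two-ideal comparison. The correct conclusion of (ii) is therefore that \emph{some} maximal left ideal over $K$ fails to be finitely generated, not that all of them do.
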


\begin{proof}
(i) Let $I$ be a left ideal of finite codimension in $A$ which is not finitely generated. Let $J$ be the kernel of the map $ \theta \colon I \rightarrow \B(A/I)$ given by $a \mapsto (x + I \mapsto ax+I)$. Then 
$$J = \{ a \in I : ax \in I \text{ for all } x \in A\}$$
and is a two-sided ideal contained in $I$ (in fact the largest one). The ideal $J$ has finite codimension in $I$, since $\dim(I/J) \leq \dim \im \theta \leq \dim \B(A/I) < \infty,$ and hence also has finite codimension in $A$. In particular we can write $I = J + \spn\{a_1, \ldots, a_n \}$, for some $n \in \N$, and some $a_1, \ldots, a_n \in I$. If $J$ were finitely generated as a left ideal, then taking a finite generating set for $J$ together with $a_1, \ldots, a_n$ would give a finite generating set for $I$. Hence $J$ cannot be finitely generated.

(ii) This follows from part (i) and \cite[Lemma 2.9]{DZ}.
\end{proof}

We are now ready to prove the first of our main results.

\begin{proposition}		\label{2.1b}
Let $G$ be an infinite group with a finite-index normal subgroup $H$ which has an infinite abelian quotient. Let $\omega$ be a weight on $G$. Then $\ell^1(G, \omega)$ has a finite-codimension maximal left ideal that is not finitely generated. In particular $\ell^1(G, \omega)$ satisfies the Dales--{\.Z}elazko Conjecture.
\end{proposition}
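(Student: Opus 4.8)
The plan is to reduce everything to the commutative case by combining Lemma \ref{2.1} with Lemma \ref{2.1a}. First I would pick a normal subgroup $N$ of $H$ with $A := H/N$ infinite and abelian, which exists by hypothesis. Restricting $\omega$ to $H$ and then passing to the quotient by $N$, Section 2 supplies the induced weight $\widetilde{\omega\vert_H}$ on $A$ together with a surjective bounded algebra homomorphism $q \colon \ell^1(H, \omega\vert_H) \to \ell^1(A, \widetilde{\omega\vert_H})$ whose kernel $J_{\omega\vert_H}(H, N)$ is closed. Since $A$ is infinite, $\ell^1(A, \widetilde{\omega\vert_H})$ is an infinite-dimensional, commutative, unital Banach algebra.

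Next I would invoke the commutative case of the Dales--{\.Z}elazko conjecture \cite{FT} to obtain a maximal ideal $M$ of $\ell^1(A, \widetilde{\omega\vert_H})$ that is not finitely generated. Because the algebra is commutative and unital, $M$ is automatically closed, and the quotient $\ell^1(A, \widetilde{\omega\vert_H})/M$ is a unital Banach algebra which is a field, hence isomorphic to $\C$ by Gelfand--Mazur; thus $M$ has codimension $1$. Pulling back along the surjection $q$, the set $q^{-1}(M)$ is a closed (two-sided, hence left) ideal of $\ell^1(H, \omega\vert_H)$ of codimension $1$. I claim it is not finitely generated as a left ideal: if one had $q^{-1}(M) = \ell^1(H, \omega\vert_H) g_1 + \cdots + \ell^1(H, \omega\vert_H) g_k$, then applying the surjective homomorphism $q$ would give $M = q(q^{-1}(M)) = \ell^1(A, \widetilde{\omega\vert_H}) q(g_1) + \cdots + \ell^1(A, \widetilde{\omega\vert_H}) q(g_k)$, contradicting the choice of $M$.

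Thus $\ell^1(H, \omega\vert_H)$ possesses a closed, finite-codimension left ideal that is not finitely generated. Since $H$ is a finite-index normal subgroup of $G$, Lemma \ref{2.1} then yields a closed, finite-codimension left ideal of $\ell^1(G, \omega)$ that is not finitely generated. As $\ell^1(G, \omega)$ is unital, with identity $\delta_e$, Lemma \ref{2.1a}(ii) upgrades this to a finite-codimension \emph{maximal} left ideal of $\ell^1(G, \omega)$ that is not finitely generated. Finally, $G$ infinite forces $\ell^1(G, \omega)$ to be infinite dimensional, so this maximal left ideal witnesses the Dales--{\.Z}elazko conjecture for $\ell^1(G, \omega)$.

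As for where the difficulty lies: essentially all of the substantive work has been front-loaded into Lemma \ref{2.1} (ascending a finite-index group extension) and into the cited commutative result, so what remains is assembly. The one step whose bookkeeping needs genuine care is the pullback argument, showing that finite generation of $q^{-1}(M)$ would descend through the surjection $q$ to contradict the non-finite-generation of $M$; the verifications of unitality, codimension, and closedness are then routine, and I would not anticipate any real obstacle beyond correctly fitting these pieces together.
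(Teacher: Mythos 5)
Your proposal is correct and follows essentially the same route as the paper: reduce to the commutative Dales--\.Zelazko result via an infinite abelian quotient of $H$ (the paper uses $H/H'$ specifically), pull the non-finitely-generated codimension-one ideal back along the quotient homomorphism, ascend to $G$ via Lemma \ref{2.1}, and finish with Lemma \ref{2.1a}(ii). The extra details you supply (Gelfand--Mazur for codimension one, and the surjection argument showing finite generation would descend) are exactly the routine verifications the paper leaves implicit.
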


\begin{proof}
Since by \cite{DZ} the Dales--{\.Z}elazko Conjecture holds in the commutative case, the Banach algebra 
$\ell^1\left(H/H', \widetilde{\omega|_H} \right)$ has a codimension-1 ideal which fails to be finitely generated. By considering the preimage of this ideal under the quotient map $\ell^1(H, \omega|_H) \rightarrow \ell^1(H/H', \widetilde{\omega|_H})$ we see that $\ell^1(H, \omega|_H)$ has a codimension-1 ideal which is not finitely generated.  It follows from Lemma \ref{2.1} that $\ell^1(G, \omega)$ has a finite-codimension left ideal which fails to be finitely generated. Hence, by Lemma \ref{2.1a}(ii), $\ell^1(G, \omega)$ has a maximal left ideal of finite codimension which is not finitely generated, as required.
\end{proof}

\begin{proof}[Proof of Theorem \ref{2.2}]
(i) Suppose that $G$ is virtually soluble. Then $G$ has a finite-index soluble subgroup $S$ that we may take to be normal. Set $k = \min \{ i \in \N : [ S^{(i)} : S^{(i+1)}] = \infty \}$ and observe that $H := S^{(k)}$ is a finite-index subgroup of $G$ with an infinite abelian quotient. Moreover, since $H$ is characteristic in $S$ it is normal in $G$, so we we may apply Proposition \ref{2.1b}.

(ii) Suppose that $G$ is virtually free. Then $G$ contains a finite-index free subgroup $H$, which we may assume is normal. The result now follows from Proposition \ref{2.1b}.
\end{proof}

We now move on to our theorem about simple groups.

\begin{proof}[Proof of Theorem \ref{2.6}]
	By \cite[Corollary 1.9(ii)]{W1} (and its proof) the augmentation ideal $\ell_0^1(G, \omega)$ is generated by finitely many elements of the form $\delta_e - \delta_t \ (t \in G)$. We shall show that this is the only proper, finite-codimension left ideal. Indeed, let $I$ be a proper, left ideal of $\ell^1(G, \omega)$ of finite codimension. Then $G$ acts linearly on the quotient $E :=\ell^1(G, \omega)/I$, and since $G$ is simple the action is either trivial or faithful. If the action were faithful, then $G$ would be a linear group, which is impossible by Malcev's Theorem \cite{M40}. As such the action is trivial, and so $ \delta_t \cdot x = x$ for all $x \in E$ and all $t \in G$.  It follows that $(\delta_e - \delta_t) \cdot x = 0 \ (x \in E)$ for all $t \in G$ and, since these elements generate $\ell^1_0(G, \omega)$ as a left ideal, we must have $f \cdot x = 0$ for all $f \in \ell^1_0(G, \omega)$.
	As such $\ell^1_0(G, \omega)$ is contained in the kernel of the action of $\ell^1(G,\omega)$ on $E$, which is contained in $I$,
	and hence $I$ must equal the augmentation ideal by maximality. This completes the proof.
\end{proof}

	Finally, we turn towards the proof of Theorem \ref{2.7}. Our strategy for showing that the left ideals in question are finitely generated  will involve reducing to the case of an ideal of the form $J_\omega(G,H)$ for a finite-index subgroup $H$. We shall show that, for radial exponential weights, $J_\omega(\F_m, H)$ is finitely generated for any finite-index subgroup $H$ of $\F_m$. Then, exploiting the fact that any finitely-generated group is the quotient of some $\F_m$, we shall show that in fact $J_\omega(G, H)$ is finitely generated, whenever $G$ is a finitely-generated group, $H$ is a finite-index subgroup, and $\omega$ is a radial exponential weight. To ease the notation we fix the base of our radial exponential weights to be $2$, although this plays no significant role in the proofs.
	
	To carry this out we first require a technical lemma about free groups. Note that the subgroup $H$ of the lemma always has a generating set of the form described.

\begin{lemma}	\label{2.3}
	Let $m\in \N$, let $\F_m$ be the free group on the set $\{ a_1, \ldots, a_m\}$,
	and let $H$ be a finitely-generated subgroup of $\F_m$. Let $X = \{ a_1^{\pm 1}, \ldots, a_m^{\pm 1} \}$, and let $Y$ be a generating set for $H$ of the form $\dot{B}^X_r \cap H$, for some $r \in \N$. Take $u \in H$, and write $u = y_1 y_2 \cdots y_n$, where $n = |u|_Y$, and $y_1, \ldots, y_n \in Y$.  Write
	$$y_i = x_{i,1} x_{i,2}\cdots x_{i,k_i}  \qquad (i=1,\ldots, n)$$
	where $x_{i,p} \in X$ and $k_i = |y_i|_X$ , for $i=1,\ldots, n$, and $p = 1,\ldots, k_i$.
	\begin{enumerate}
		\item[\rm (i)] For each $i = 1, \ldots, n$, when we reduce 
		the expression for $y_iy_{i+1}$ given by 
		$$x_{i,1}\cdots x_{i, k_i}x_{i+1,1}\cdots x_{i+1, k_{i+1}}$$
		at most $\min \{ \lceil \frac{k_i}{2} \rceil , \lceil \frac{k_{i+1}}{2} \rceil  \} -1$ of the elements $x_{i,1}, \ldots, x_{i, k_i}$ can cancel. 
	\end{enumerate}	
\noindent
Moreover for each $j \in \{ 2, \ldots, n \}$ the following hold. 
		\begin{enumerate}
			\item[\rm (ii)] When we convert the expression for $y_1y_2 \cdots y_j$ given by
			$$x_{1,1}\cdots x_{1,k_1}x_{2,1} \cdots x_{2,k_2} \cdots x_{j,1}\cdots x_{j,k_j}$$
			 to a reduced word, the elements $x_{j,p}$ do not get canceled for $p= \lceil \frac{k_j}{2} \rceil, \ldots, k_j$.
			\item[\rm (iii)]  $ |y_1 y_2 \cdots y_{j-1}|_X < |y_1y_2\cdots y_{j-1}y_{j}|_X$.
		\end{enumerate}	
\end{lemma}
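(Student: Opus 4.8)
The plan is to reduce everything to two elementary facts: the standard description of cancellation in a free group, and part~(i), which is the real engine. Recall that if $v,w$ are reduced words over $X$ then there is a unique factorisation $v = v_0 s$, $w = s^{-1}w_0$ into reduced words with $v_0 w_0$ reduced, and $v_0 w_0$ is the reduced form of $vw$; so cancellation occurs only at the junction, a suffix of $v$ of some length $c$ pairing off with a prefix of $w$ of the same length $c$, with no other letters of $v$ or $w$ affected. First I would note that, for a fixed $j$, parts~(ii) and~(iii) are two phrasings of one inequality. Put $w_{j-1} := y_1 \cdots y_{j-1}$ (in reduced form) and let $d_j$ be the number of letters cancelled when the product $w_{j-1} \cdot y_j$ is reduced; the description above shows that the letters of $y_j$ surviving in $y_1 \cdots y_j$ are exactly $x_{j,d_j+1}, \ldots, x_{j,k_j}$ and that $|y_1 \cdots y_j|_X = |w_{j-1}|_X + k_j - 2d_j$. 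Hence (ii) says $d_j \le \lceil k_j/2\rceil - 1$ and (iii) says $d_j < k_j/2$, and for the integer $d_j$ these are the same condition, namely $d_j < \lceil k_j/2\rceil$. So it suffices to prove (i) together with the bound $d_j < \lceil k_j/2\rceil$ for all $j \ge 2$.

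For (i) I would argue by contradiction: suppose the cancellation $c$ between $y_i$ and $y_{i+1}$ satisfies $c \ge \min\{\lceil k_i/2\rceil, \lceil k_{i+1}/2\rceil\}$. Since $2\lceil t/2\rceil \ge t$, if $c \ge \lceil k_i/2\rceil$ then $|y_i y_{i+1}|_X = k_i + k_{i+1} - 2c \le k_{i+1} \le r$, while if $c \ge \lceil k_{i+1}/2\rceil$ then $|y_i y_{i+1}|_X \le k_i \le r$; either way $y_i y_{i+1} \in B_r^X \cap H = Y \cup \{e\}$, as $H$ is a subgroup. But then replacing the consecutive factors $y_i, y_{i+1}$ in $u = y_1 \cdots y_n$ by the single element $y_i y_{i+1}$, or deleting both of them if $y_i y_{i+1} = e$, expresses $u$ as a product of fewer than $n$ elements of $Y$, contradicting $n = |u|_Y$. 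Hence $c < \min\{\lceil k_i/2\rceil, \lceil k_{i+1}/2\rceil\}$, which is (i).

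Next I would prove $d_j < \lceil k_j/2\rceil$ by induction on $j \ge 2$, the base case $j = 2$ being (i) with $i=1$. For the step, assume $d_{j-1} < \lceil k_{j-1}/2\rceil$. By the cancellation description, $w_{j-1}$ ends with the block $S := x_{j-1,d_{j-1}+1} \cdots x_{j-1,k_{j-1}}$, a suffix of $y_{j-1}$ of length $\ell := k_{j-1} - d_{j-1}$, and the hypothesis gives $\ell \ge \lfloor k_{j-1}/2\rfloor + 1 \ge \lceil k_{j-1}/2\rceil$. The letters of $w_{j-1}$ cancelled on reducing $w_{j-1} y_j$ form a suffix of $w_{j-1}$, of length $d_j$. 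If $d_j > \ell$, then all of $S$ --- that is, the last $\ell$ letters of $y_{j-1}$ --- cancels against the first $\ell$ letters of $y_j$, so the cancellation in the product $y_{j-1} y_j$ is at least $\ell \ge \lceil k_{j-1}/2\rceil$, contradicting (i); hence $d_j \le \ell$. In this case the cancelled suffix of $w_{j-1}$ lies inside $S$, so it is literally the suffix of $y_{j-1}$ of length $d_j$; these letters pair off with the first $d_j$ letters of $y_j$, so the cancellation in $y_{j-1} y_j$ is at least $d_j$, and (i) with $i = j-1$ gives $d_j \le \lceil k_j/2\rceil - 1 < \lceil k_j/2\rceil$, completing the induction.

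The step I expect to be the main obstacle is the ``deep cancellation'' alternative $d_j > \ell$ in the induction: a priori, multiplying $y_1 \cdots y_{j-1}$ by $y_j$ could cancel not only the letters just contributed by $y_{j-1}$ but also letters from earlier factors, which would sever the connection between $d_j$ and the two-factor estimate. The resolution is that this scenario would force more than half of $y_{j-1}$ to be absorbed by $y_j$, which part (i) explicitly forbids; so part (i) carries the real weight and the induction is essentially bookkeeping on top of it. The point needing care is simply that, once $d_j \le \ell = k_{j-1} - d_{j-1}$, the cancelled suffix of $w_{j-1}$ is a genuine suffix of $y_{j-1}$ unaffected by any earlier cancellation, which is exactly why the two-factor estimate (i) applies.
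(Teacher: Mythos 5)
Your proof is correct and takes essentially the same route as the paper: part (i) by the same contradiction with $n = |u|_Y$ via $y_iy_{i+1} \in B_r^X \cap H$, and (ii)--(iii) by induction on $j$, using the surviving tail of $y_{j-1}$ guaranteed by the inductive hypothesis to reduce the cancellation count $d_j$ to the two-factor bound of part (i). Your packaging of (ii) and (iii) as the single inequality $d_j < \lceil k_j/2 \rceil$ is just a mild tidying of the paper's argument.
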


\begin{proof}
	We first prove part (i). Observe that if any more than $\lceil k_{i+1}/2 \rceil -1$ (that is to say, half or more) of the elements $x_{i+1,1}, \ldots, x_{i+1,k_{i+1}}$ are canceled when reducing 
	$x_{i,1}\cdots x_{i,k_1} x_{i+1,1}\cdots x_{i+1,k_2}$
	then we remove at least as many elements from $x_{i,1},\ldots, x_{i,k_i}$ as we add from $ x_{i+1,1},\ldots, x_{i+1,k_{i+1}}$, 
	so that $|y_iy_{i+1}|_X \leq |y_i|_X \leq r$. This implies that $y_iy_{i+1} \in Y\cup \{ e \}$ by our hypothesis on the special form of the generating set $Y$, and this contradicts $n = |u|_Y$. By a symmetrical argument we cannot cancel more than $\lceil k_i/2 \rceil -1$ elements either.
	
	We shall prove parts (ii) and (iii) together by induction on $j$. The base case $j=2$ follows from part (i).
	Suppose that the induction hypothesis holds for $j$. We shall now prove that (ii) holds for $j+1$. Write
	$$y_1y_2 \cdots y_j  = z_1 z_2 \cdots z_q,$$
	for some $z_1, \ldots, z_q \in X$, where $q = |y_1 \cdots y_j|_X$ and $ z_1 z_2 \cdots z_q$ is a reduced word over $X$.
	By the induction hypothesis 
	$$z_q = x_{j,k_j}, \quad z_{q-1} = x_{j, k_j-1}, \quad \ldots, \quad z_{q- \lfloor k_j/2 \rfloor} = x_{j, \lceil k_j/2 \rceil}.$$
	By part (i), when we reduce 
	$$y_jy_{j+1} = x_{j,1}\cdots x_{j,k_j}x_{j+1, 1} \cdots x_{j+1, k_{j+1}}$$ 
	at most $\lceil k_j/2 \rceil -1$ of the elements $x_{j+1, 1}, \ldots, x_{j+1, k_{j+1}}$ can cancel with 
	the elements $x_{j,1}, \ldots, x_{j,k_j}$. This means that when we reduce
	$$z_1 z_2 \cdots z_qx_{j+1, 1}\cdots x_{j+1, k_{j+1}}$$
	the only cancellations that can occur are between the elements $x_{j+1, 1}, \ldots, x_{j+1, k_{j+1}}$ and the elements $x_{j,1}, \ldots, x_{j,k_j}$. However, part (i) also implies that at most $\lceil k_{j+1}/2 \rceil -1$ of these elements can cancel, and so $x_{j+1, p}$ will not be canceled, for $p = \lceil k_{j+1}/2 \rceil, \ldots, k_{j+1}$ as required for part (ii). It also follows that we are canceling fewer terms from $z_1,\ldots, z_q$ than we are adding from $x_{j+1,1}, \ldots, x_{j+1, k_{j+1}}$, so that 
	$ |y_1y_2 \cdots y_{j-1}|_X < |y_1y_2\cdots y_{j-1}y_{j}|_X$, and part (iii) follows.
\end{proof}

\begin{lemma}	\label{2.4}
	Let $m \in \N$, write $\F_m = \langle a_1,\ldots, a_m \rangle$,  and let $X = \{ a_1^{\pm 1}, \ldots, a_m^{\pm 1} \}$. Set $\omega(t) = 2^{|t|_X} \ (t \in \F_m)$. Let $H$ be a finite-index subgroup of $\F_m$. Then the left ideal $J_\omega(\F_m, H)$ of $\ell^1(\F_m, \omega)$ is generated by finitely many elements of the form $\delta_e - \delta_t$, where $t \in H$.
\end{lemma}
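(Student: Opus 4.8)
The plan is to reduce the statement to the corresponding fact about the augmentation ideal of $\ell^1(H,\omega\vert_H)$, and then to establish that fact by a telescoping argument whose $\ell^1$-convergence is supplied by Lemma \ref{2.3}.

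First I would pass from $H$ up to $\F_m$. Fix a left transversal $t_1=e,\ldots,t_p$ for $H$ in $\F_m$, where $p=[\F_m:H]$, so that $\F_m=\bigsqcup_{i=1}^p t_iH$. Since the cosets $t_iH$ are pairwise disjoint, $\ell^1(\F_m,\omega)$ is the $\ell^1$-direct sum of the subspaces $\delta_{t_i}*\ell^1(H,\omega\vert_H)$; writing $f=\sum_i\delta_{t_i}*f_i$ accordingly and unwinding the defining equations of $J_\omega(\F_m,H)$ shows that $f\in J_\omega(\F_m,H)$ precisely when every $f_i\in\ell^1_0(H,\omega\vert_H)$, so that $J_\omega(\F_m,H)=\bigoplus_{i=1}^p\delta_{t_i}*\ell^1_0(H,\omega\vert_H)$. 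Consequently, if $\ell^1_0(H,\omega\vert_H)$ is generated as a left ideal of $\ell^1(H,\omega\vert_H)$ by a finite set $\{\delta_e-\delta_s:s\in S\}$ with $S\subseteq H$, then for $f\in J_\omega(\F_m,H)$ one writes each $f_i=\sum_{s\in S}a_{i,s}*(\delta_e-\delta_s)$ with $a_{i,s}\in\ell^1(H,\omega\vert_H)$ and obtains $f=\sum_{s\in S}\bigl(\sum_i\delta_{t_i}*a_{i,s}\bigr)*(\delta_e-\delta_s)$, so $J_\omega(\F_m,H)$ is the left ideal of $\ell^1(\F_m,\omega)$ generated by $\{\delta_e-\delta_s:s\in S\}$ (the reverse inclusion being immediate, as these elements lie in $J_\omega(\F_m,H)$). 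It therefore suffices to handle $\ell^1_0(H,\omega\vert_H)$.

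Next I would treat $\ell^1_0(H,\omega\vert_H)$. As $H$ has finite index in $\F_m$ it is finitely generated, so I may fix a generating set $Y=\dot{B}_r^X\cap H$ of the form required by Lemma \ref{2.3}; it is finite. For $u\in H\setminus\{e\}$, write $u=y_1\cdots y_n$ with $y_i\in Y$ and $n=|u|_Y$, and use the telescoping identity
$$\delta_e-\delta_u=\sum_{k=0}^{n-1}\delta_{y_1\cdots y_k}*(\delta_e-\delta_{y_{k+1}}).$$
By Lemma \ref{2.3}(iii) the integers $|y_1\cdots y_j|_X$ are strictly increasing in $j$ and end at $|u|_X$, so $|y_1\cdots y_k|_X\le|u|_X-(n-k)$ and hence $\|\delta_{y_1\cdots y_k}\|_{\omega\vert_H}=2^{|y_1\cdots y_k|_X}\le 2^{k-n}\,\omega\vert_H(u)$. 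Given arbitrary $f\in\ell^1_0(H,\omega\vert_H)$, I would write $f=-\sum_u f(u)(\delta_e-\delta_u)$ (absolutely convergent, using $\sum_u f(u)=0$), substitute the telescoping identity for each $u$, and regroup this absolutely convergent double series according to the element of $Y$ occurring in the right-hand factor. This expresses $f$ as $\sum_{y\in Y}g_y*(\delta_e-\delta_y)$ with $g_y\in\ell^1(H,\omega\vert_H)$ and
$$\|g_y\|_{\omega\vert_H}\le\sum_u|f(u)|\sum_{k=0}^{|u|_Y-1}\|\delta_{y_1\cdots y_k}\|_{\omega\vert_H}\le\sum_u|f(u)|\,\omega\vert_H(u)\sum_{k=0}^{|u|_Y-1}2^{k-|u|_Y}\le\|f\|_{\omega\vert_H},$$
since $\sum_{k=0}^{n-1}2^{k-n}<1$. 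Thus $\ell^1_0(H,\omega\vert_H)=\sum_{y\in Y}\ell^1(H,\omega\vert_H)*(\delta_e-\delta_y)$ with $Y$ finite, which together with the first step completes the proof.

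The main obstacle is the $\ell^1$-convergence in the second step. The crude bound on a prefix, $|y_1\cdots y_k|_X\le r\,k$, is useless here: it would make $\sum_k\|\delta_{y_1\cdots y_k}\|_{\omega\vert_H}$ vastly larger than $\omega\vert_H(u)$, so the coefficients $g_y$ might fail to lie in $\ell^1(H,\omega\vert_H)$. What rescues the argument is the geometric decay $\|\delta_{y_1\cdots y_k}\|_{\omega\vert_H}\le 2^{k-n}\omega\vert_H(u)$, which rests entirely on the combinatorial fact isolated as Lemma \ref{2.3}(iii), that appending a further element of $Y$ to a $Y$-geodesic strictly increases its $X$-length. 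Over the group ring $\C H$ the analogous statement --- that the augmentation ideal is generated by $\{\delta_e-\delta_h:h\in H\}$, finitely when $H$ is finitely generated --- is elementary and needs no such input; it is precisely the weight that forces one to decompose $u$ along a carefully chosen geodesic rather than an arbitrary word, and so to invoke Lemma \ref{2.3}.
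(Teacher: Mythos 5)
Your proof is correct and follows essentially the same route as the paper's: reduce $J_\omega(\F_m,H)$ to the augmentation ideal of $\ell^1(H,\omega\vert_H)$ via a transversal decomposition, then generate $\ell^1_0(H,\omega\vert_H)$ by $\{\delta_e-\delta_y : y\in Y\}$ using the telescoping identity along $Y$-geodesics, with the $\ell^1$-convergence of the coefficient series coming from the strict growth in Lemma \ref{2.3}(iii). The only differences are cosmetic (a left rather than right transversal, and writing the telescoping identity explicitly instead of citing \cite[Lemma 6.2(i)]{W1}).
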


\begin{proof}
	Let $k = [\F_m : H]$ and let $t_1, \ldots, t_k$ be a right transversal for $H$ in $\F_m$. Write $\gamma = \omega|_H$. Then $J_\omega(\F_m, H)$ can be written as 
	$$J_\omega(\F_m, H) = \ell^1_0(H,\gamma)*\delta_{t_1} + \cdots + \ell^1_0(H, \gamma)*\delta_{t_k}.$$
	As such, it suffices the prove that $\ell^1_0(H,\gamma)$ is  finitely generated as a left ideal of $\ell^1(H, \gamma)$. The weight $\gamma$ may not be radial, so we cannot apply \cite[Theorem 1.8]{W1} directly, but instead we adapt its proof.
	
	Since $H$ has finite index it is finitely generated. Let $Y$ be a finite generating set for $H$ of the form $\dot{B}_r^X \cap H$ for some $r \in \N$. Let given $u \in H$, and write $u = y_1y_2\cdots y_n$, for $y_1, \ldots, y_n \in Y$, where $n = |u|_Y$. By \cite[Lemma 6.2(i)]{W1},  we can write  
	
	$$\delta_e - \delta_u = \sum_{y \in Y} g_y*(\delta_e - \delta_y),$$
	
	where each $g_y \in \C H$ and has the form
	\begin{equation}		\label{eq1}
	g_y = \sum_{j=0}^{n-1} h_y^{(j)},
	\end{equation}
	where each $h_y^{(j)}$ is either $0$ or $\delta_{y_1\cdots y_j}$ in the case that $j \neq 0$, and is either $0$ or $\delta_e$ in the case that $j=0$. (Note that, contrary to what is stated immediately before it, \cite[Lemma 6.2(i)]{W1} is a purely algebraic statement that is independent of the weight; we shall see that we are able to prove that part (ii) of that lemma also holds for $\gamma$ by applying Lemma \ref{2.3} above). 
	
	For $j =1, \ldots, n-1$ we have $\|h_y^{(j)}\|_\gamma \leq 2^{|y_1 \cdots y_j|_X}$, and so  $\|g_y\|_\gamma \leq 1 + \sum_{j=1}^{n-1} 2^{|y_1 \cdots y_j|_X}$. By Lemma \ref{2.3}(iii) this sum consists of strictly increasing powers of $2$, each less than $2^{|u|_X}$ so that, 
	\begin{equation} 	\label{eq2}
	\|g_y\|_\gamma \leq \sum_{j=0}^{|u|_X -1} 2^j \leq 2^{|u|_X} = \gamma(u) \qquad (y \in Y).
	\end{equation}
	
	We can now mimic the proof of \cite[Theorem 1.8]{W1}. Enumerate $H = \{ u_0 =e, u_1, u_2 \ldots \}$, and let $f = \sum_{i=0}^\infty \alpha_i \delta_{u_i} \in \ell_0^1(H,\gamma)$. For each $i \in \N$ write 
	$$\delta_e - \delta_{u_i} = \sum_{y \in Y} g_y^{(i)} * (\delta_e - \delta_y)$$
	 for some functions $g_y^{(i)} \in \C H$ as in \eqref{eq1}. Define functions $\phi_y \ (y \in Y)$ by
	 $$\phi_y = -\sum_{i=0}^\infty \alpha_i g_y^{(i)},$$ 
	 and note that the series converges because, by \eqref{eq2}, $\| g_y^{(i)} \|_\gamma \leq \gamma(u_i) \ (i \in \N)$.
	 
	 Then, since $\sum_{i=0}^\infty \alpha_i =0$, we have
	\begingroup
	\allowdisplaybreaks
	\begin{align*}
	 f &= \sum_{i=0}^\infty \alpha_i \delta_{u_i} - \left(\sum_{i=0}^\infty \alpha_i \right) \delta_e = - \sum_{i=0}^\infty \alpha_i(\delta_e - \delta_{u_i}) \\
	 &= -\sum_{i=1}^\infty \alpha_i \left( \sum_{y \in Y} g_y^{(i)} *(\delta_e - \delta_y) \right) 
	 = \sum_{y \in Y} \phi_y*(\delta_e - \delta_y).
	\end{align*}
	\endgroup
	As $f$ was arbitrary, it follow that $\ell^1_0(H,\gamma)$ is generated by the finite set $\{\delta_e - \delta_y : y \in Y \}$, which completes the proof. 	
\end{proof}

The next lemma is an interesting generalisation of \cite[Corollary 1.9(ii)]{W1}, which states that $\ell^1_0(G, \omega)$ is finitely generated for any radial exponential weight $\omega$ on $G$.

\begin{lemma}	\label{2.5}
	Let $G$ be a finitely-generated group with a finite, symmetric generating set $X$, and let $H$ be a finite-index subgroup. Let $\omega$ be the weight on $G$ defined by $\omega(t) = 2^{|t|_X} \ (t \in G)$. Then the left ideal $J_\omega(G,H)$ is finitely generated in $\ell^1(G, \omega)$. Moreover, the generators can be chosen to each have the form $\delta_e - \delta_t$ for some $t \in H$.
\end{lemma}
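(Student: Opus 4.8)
The plan is to deduce this from Lemma~\ref{2.4} by lifting the entire situation to a free group, using that word-length weights behave well under quotients. Write $X = \{ x_1^{\pm 1}, \ldots, x_m^{\pm 1}\}$, let $\F_m$ be free on $\{a_1, \ldots, a_m\}$ with its standard symmetric generating set $\widetilde{X} = \{ a_1^{\pm 1}, \ldots, a_m^{\pm 1}\}$, and let $q \colon \F_m \to G$ be the surjective homomorphism determined by $a_i \mapsto x_i$. Set $\nu(s) = 2^{|s|_{\widetilde{X}}}$ for $s \in \F_m$. The elementary point is that, for $g \in G$, the word length $|g|_X$ equals $\min\{ |s|_{\widetilde{X}} : q(s) = g \}$: a length-$n$ word over $X$ representing $g$ lifts letter-by-letter to a length-$n$ word over $\widetilde{X}$, and conversely $q$ applied to a word over $\widetilde{X}$ does not increase length. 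Hence the weight $\widetilde{\nu}$ that $\nu$ induces on $\F_m / \ker q \cong G$ via \eqref{eq5} is precisely $\omega$. By \cite[Theorem 3.7.13]{RS}, $q$ therefore extends to a surjective bounded algebra homomorphism $q \colon \ell^1(\F_m, \nu) \to \ell^1(G, \omega)$ with kernel $J_\nu(\F_m, \ker q)$.

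Next I would set $K = q^{-1}(H)$. Because $q$ is surjective it induces a bijection between the cosets of $K$ in $\F_m$ and those of $H$ in $G$ (for left cosets, $sK \mapsto q(s)H$; similarly on the right), so $K$ is a finite-index subgroup of $\F_m$, and of course $\ker q \subseteq K$. Using the formula $q(f)(g) = \sum_{s \in q^{-1}(g)} f(s)$ for the quotient map, the sum of $q(f)$ over a coset of $H$ in $G$ is exactly the sum of $f$ over the preimage of that coset, which is the corresponding coset of $K$ in $\F_m$; this shows at once that $q^{-1}\bigl( J_\omega(G,H) \bigr) = J_\nu(\F_m, K)$.

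Now I apply Lemma~\ref{2.4} to the finite-index subgroup $K$ of $\F_m$, obtaining $s_1, \ldots, s_k \in K$ with
$$J_\nu(\F_m, K) = \ell^1(\F_m, \nu)*(\delta_e - \delta_{s_1}) + \cdots + \ell^1(\F_m, \nu)*(\delta_e - \delta_{s_k}).$$
Since $q$ is surjective and $J_\nu(\F_m,K) = q^{-1}\bigl(J_\omega(G,H)\bigr)$, we have $q\bigl(J_\nu(\F_m,K)\bigr) = J_\omega(G,H)$; applying the (linear, multiplicative, surjective) map $q$ to the displayed identity and using $q\bigl(\ell^1(\F_m,\nu)*(\delta_e - \delta_{s_i})\bigr) = \ell^1(G,\omega)*(\delta_e - \delta_{q(s_i)})$ yields
$$J_\omega(G,H) = \ell^1(G,\omega)*(\delta_e - \delta_{q(s_1)}) + \cdots + \ell^1(G,\omega)*(\delta_e - \delta_{q(s_k)}).$$
Each $q(s_i)$ lies in $H$ because $s_i \in K = q^{-1}(H)$, so this is the desired finite generating set, all of whose members have the required form.

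Beyond Lemma~\ref{2.4} the argument is essentially formal; the two points worth spelling out are the compatibilities used above, namely that the weight induced on the quotient $\F_m/\ker q$ is exactly $\omega$, and that $J_\omega(G,H)$ pulls back under $q$ to $J_\nu(\F_m,K)$. I expect the former --- the interaction between radial exponential weights and quotient maps --- to be the part most deserving of care, although neither presents a genuine obstacle.
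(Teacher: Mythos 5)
Your proposal is correct and follows essentially the same route as the paper: lift to the free group via $q\colon \F_m \to G$, check that the radial exponential weight on $\F_m$ induces exactly $\omega$ on the quotient, identify $J_\omega(G,H)$ as the image of $J_\nu(\F_m, q^{-1}(H))$, and push forward the generators $\delta_e - \delta_{s_i}$ supplied by Lemma~\ref{2.4}. You are somewhat more explicit than the paper about why $q\bigl(J_\nu(\F_m,K)\bigr) = J_\omega(G,H)$ and why surjectivity lets the generating set pass to the image, but these are exactly the points the paper describes as ``routinely checked.''
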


\begin{proof}
	Write $X = \{u_1^{\pm 1},\ldots, u_n^{ \pm 1} \}$, and let $\{v_1, \ldots, v_m \}$ be a finite generating set for $H$, where $n, m \in \N$. Writing $\F_n = \langle a_1, \ldots, a_n \rangle$, let $q \colon \F_n \to G$ denote the surjective homomorphism defined by $a_i \mapsto u_i \ (i=1, \ldots, n)$. Let $K = q^{-1}(H)$, which is a finite-index subgroup of $\F_n$. 
	
	Let $\beta$ denote the weight on $\F_n$ given by $\beta(t) = 2^{|t|_Z}$, where $Z = \{ a_1^{\pm 1}, \ldots, a_n^{\pm 1} \}$,  and let $\widetilde{\beta}$ be the weight induced on $G$ by regarding it as a quotient of $\F_n$ via $q$, as in \eqref{eq5}. We claim that $\widetilde{\beta} = \omega$. Indeed, note that 
	$$|t|_X = \inf \{ |s|_Z : q(s) = t \},$$
	and by \eqref{eq5} we have 
	$$\widetilde{\beta}(t) = \inf\{ \beta(s) : q(s) = t \} = \inf\{2^{|s|_Z} : q(s) = t \} = 2^{\inf \{ |s|_Z : q(s) = t \}} = \omega(t).$$
	
	The map $q$ extends to a bounded algebra homomorphism, which we also denote by $q$, 
	$$q \colon \ell^1(\F_n, \beta) \to \ell^1(G, \widetilde{\beta}) = \ell^1(G, \omega).$$
	It is routinely checked that $q(J_\beta(\F_n,K)) = J_\omega(G,H)$. By Lemma \ref{2.4} $J_\beta(\F_n,K)$ is finitely generated, and hence so is $J_\omega(G,H)$. The final statement also follows from Lemma \ref{2.4} since $q(\delta_e - \delta_t) = \delta_e - \delta_{q(t)} \ (t \in K)$.
\end{proof}


We now prove our final theorem.

\begin{proof}[Proof of Theorem \ref{2.7}]
	(i) Let $f \in \ell^1(G,\omega) \setminus \{ 0 \}$. We shall show that there is a finite-codimension maximal left ideal $I$ in $\ell^1(G,\omega)$ for which $f \notin I$. By multiplying on the left by $\delta_t$ for some $t \in G$, we may assume that $f(e) \neq 0$. Let $\eps = \frac{1}{2}|f(e)|$, and let $F$ be a finite subset of $G$ containing the identity for which
	$$\sum_{t \in G \setminus F} |f(t)|\omega(t) <\eps.$$
	Since $G$ is residually finite there exists $H \lhd G$ of finite index such that $H \cap F = \{ e \}$. 
	
	Let $q \colon \ell^1(G, \omega) \to \ell^1(G/H, \widetilde{\omega})$ denote the quotient map. Then  
	\begin{equation}		\label{eq3}
	q(f)(e) = \sum_{s\in H}  f(s),
	\end{equation}
	which is non-zero because 
	$$\sum_{s \in H, s \neq e} |f(s)| \leq \sum_{s\in G \setminus F} |f(s)| < |f(e)|,$$
	so that $f(e)$ can never cancel with the rest of the terms in the sum \eqref{eq3}. 
	Therefore in particular $q(f) \neq 0$.
	 Since $G/H$ is a finite group, $\ell^1(G/H, \omega)$ is equal to $\C( G/H)$, which is semisimple. Therefore there exists a maximal left ideal $\widetilde{I}$ in $\ell^1(G/H, \widetilde{\omega})$ such that $q(f) \notin \widetilde{I}$. 
	Let $I = q^{-1}(\widetilde{I})$. Then $I$ is a finite-codimension maximal left ideal of $\ell^1(G, \omega)$, and $f \notin I$.
	 
	 (ii) Let $I$ be a finite-codimension left ideal of $\ell^1(G,\omega)$. Then $G$ acts linearly on the quotient 
	 $E:= \ell^1(G,\omega)/I$, and since $G$ is not a linear group, the action cannot be faithful, so has a kernel, which we shall denote by $H \lhd G$. Since $G$ is just infinite, $H$ must have finite index in $G$.
	 
	 We claim that $I \supset J_\omega(G,H)$. 
	As the action of $H$ on $E$ is trivial, we must have $\delta_t \cdot x = x$, and hence $(\delta_e-\delta_t) \cdot x = 0$, for all $t \in G$ and $x \in E$. By Lemma \ref{2.5} $J_\omega(G,H)$ is generated as a left ideal by elements of the form $\delta_e-\delta_t \ (t \in H)$, so it follows that $f \cdot x = 0$ for all $f \in J_\omega(G,H)$, which implies that	$I \supset J_\omega(G,H)$.
	
	Finally, fix finitely many elements $f_1, \ldots, f_k \in I$ such that $I = J_\omega(G,H) + \spn\{f_1, \ldots, f_k\}$. Then a finite generating set for $J_\omega(G,H)$ together with $f_1, \ldots, f_k$ form a finite generating set for $I$.
\end{proof}	

\textit{Remark.}
Note that in Theorem \ref{2.7}(ii) the left ideals are not assumed to be closed. 
However, it follows from the above proof that all of the finite-codimension left ideals of the Banach algebra in Theorem \ref{2.7} actually are closed. Indeed, we have seen that an arbitrary finite-codimension left ideal $I$ is equal to $ J_\omega(G,H) + \spn\{f_1, \ldots, f_k\}$, for some finite-index subgroup $H$ and some $f_1, \ldots, f_k \in I$, and since $J_\omega(G,H)$ is closed, $I$ must be as well. Not every unital Banach algebra has the property that all of its finite-codimension left ideals are closed: for intance, add a unit to any of the many interesting examples in \cite{CDP}. This property can be rephrased as an automatic continuity property of $\ell^1(G,\omega)$, namely that every module map from  $\ell^1(G,\omega)$ to a finite-dimensional Banach left $\ell^1(G,\omega)$-module is automatically continuous.
\vskip 2mm

\subsection*{Acknowledgements}
I would like to thank Ian Short for his support and hospitality during my time so far at the Open University. I would also like to thank Ulrik Enstad for his useful comments on an earlier draft of the article, and Karthik Rajeev her help with the reference \cite{A06}. Finally, I would like to thank the anonymous referee for their careful reading of the article.

\subsection*{Competing Interests}
The author declares none.

\end{document}